\newcommandx{\change}[2][1=]{\todo[linecolor=green,backgroundcolor=green!25,bordercolor=blue,#1]{#2}}
\newcommandx{\missing}[2][1=]{\todo[linecolor=blue,backgroundcolor=blue!25,bordercolor=red,#1]{#2}}
\newcommandx{\avis}[2][1=]{\todo[linecolor=red,backgroundcolor=red!25,bordercolor=red,#1]{#2}}
\newtheorem {theorem}{Theorem}[section]
\newtheorem {proposition} [theorem]{Proposition}
\newtheorem {lemma}  [theorem]{Lemma}
\newtheorem{thmx}{Theorem}
\theoremstyle{definition}
\newcommand{\R}{\mathbb{R}}
\newcommand{\N}{\mathbb{N}}
\renewcommand{\P}{\mathscr{P}}
\begin{document}
\title[]
{Persistence of periodic traveling waves and Abelian integrals}
\author{Armengol~Gasull, Anna~Geyer  and V\'{\i}ctor~Ma\~{n}osa}

\address{Departament de Matem\`{a}tiques, Edifici Cc, Universitat Aut\`{o}noma de Barcelona, 08193 Cerdanyola del Vall\`es, Barcelona, Spain.\newline
{}
$\phantom{xx}$Centre de Recerca Matem\`atica, Edifici Cc, Campus de Bellaterra,
08193 Cerdanyola del Vall\`es, Barcelona, Spain.}
\email{gasull@mat.uab.cat}
\address{Delft Institute of Applied
Mathematics, Faculty Electrical Engineering, Mathematics and
Computer Science Delft University of Technology, Van Mourik
Broekmanweg 6, 2628 XE Delft, The Netherlands} \email{A.Geyer@tudelft.nl}
\address{Departament de Matem\`{a}tiques and 
Institut de Matem\`{a}tiques de la UPC-Bar\-ce\-lo\-na\-Tech (IMTech),
Universitat Polit\`{e}cnica
de Catalunya, Colom 11, 08222 Terrassa, Spain} \email{victor.manosa@upc.edu}

\subjclass[2010]{Primary: 35C07; 34CO8.  Secondary: 34C23, 34C25,
37C27.}

\keywords{Traveling wave, Abelian integral,
Melnikov-Poincar\'{e}-Pontryagin function, periodic orbit, limit cycle,
bifurcation}

%%%%%%%%%%%%%%%%%%%%%%%%%%%%%%%%%%%%%%%%%%%%%%%%%%%%%%%%

\begin{abstract}
It is well known that the existence of traveling wave
solutions (TWS) for many partial differential equations (PDE) is a
consequence of the fact that an associated planar ordinary
differential equation (ODE) has certain types of solutions defined for all
time. In this paper we address the problem of persistence of TWS of a given PDE under small perturbations. Our main results deal with the situation where the associated
ODE has a center and, as a consequence, the original PDE has a
continuum of periodic traveling wave solutions. We prove that the
TWS that persist are controlled by the zeroes of some Abelian
integrals. We apply our results to several famous PDE, like
 the Ostrovsky, Klein-Gordon, sine-Gordon, Korteweg-de Vries, Rosenau-Hyman,
 Camassa-Holm, and Boussinesq equations.
\end{abstract}

%%%%%%%%%%%%%%%%%%%%%%%%%%%%%%%%%%%%%%%%%%%%%%%%%%%%%%%%
\maketitle
\setcounter{tocdepth}{2}
%\tableofcontents

%\vspace{0.5cm}

%%%%%%%%%%%%%%%%%%%%%%%%%%%%%%%%%%%%%%%%%%%%%%%%%%%%%%%%
%%%%%%%%%%%%%%%%%%%%%%%%%%%%%%%%%%%%%%%%%%%%%%%%%%%%%%%%

\section{Introduction}

  Traveling wave solutions (TWS) are an important class of
particular solutions of partial differential equations (PDE).  These
waves are special solutions which do
not change their shape and which propagate at constant speed.  They
appear in fluid dynamics, chemical kinetics involving reactions,
mathematical biology, lattice vibrations in solid state physics,
plasma physics and laser theory, optical fibers, etc. In these
systems the phenomena of dispersion, dissipation, diffusion,
reaction and convection are the fundamental physical common facts.  We refer the reader to some interesting sources to know more details about the first appearance of this kind of solutions in the works of Russell (1834), Boussinesq (1877),
Korteweg and de Vries (1895), Luther (1906),  Fisher (1937),
Kolmogorov, Petrovskii and Piskunov (1937), and to find several examples
of applications and further motivation to study them: see \cite{GasGia2015,
GilKer2004,GriSch2012,Gri1991,LiDai2007, Lie2013,Mur2002, 
SanMai1995, Zha2011} and the references therein.

When studying ordinary differential equations (ODE), especially when
they are modeling real world phenomena, it is very important to take into account whether the ODE are \emph{structurally stable}. In a few words this means that if we fix a compact set $\mathcal K$ in the
phase space it is said that an ODE is structurally stable on $\mathcal K$ when any
other close enough (in the
$\mathcal{C}^1$-topology) differential equation has a conjugated phase portrait. This
concept is relevant for applications because it implies that the
observed behaviours are qualitatively robust with respect to small changes of the model, see for instance \cite{AndroLeontGor1973, Pei1962, Sot1974} 
for more details, in particular concerning  the planar case. Recall that the boundary of the sets of structurally stable differential equations is
precisely where \emph{bifurcations} (that is, qualitative changes of 
the phase portraits) may occur.

It is well-known that for many PDE the existence of TWS is
established by proving the existence of a particular solution of a
planar ordinary differential equation.  These particular solutions
must be defined for all time and, in the light of the previous
definition, can roughly  be classified into two categories:

\begin{itemize}

\item TWS created by a dynamical behaviour that is structurally
stable. Examples of this situation are   hyperbolic limit
cycles or heteroclinic connections where both critical points are
hyperbolic and one of them is a node.

\item TWS created by a dynamical behaviour that is not structurally
stable, as for instance continua of periodic orbits,
or homoclinic or heteroclinic solutions connecting hyperbolic saddles.

\end{itemize}

In the first situation, simply take as the set $\mathcal K$ a compact
neighbourhood of the orbit that gives rise to the TWS for a given
PDE. Then it can be easily seen that a small enough $\mathcal{C}^1$ perturbation of  the original PDE with the same order will still have a TWS. This is so because all the structurally stable phenomena in ODE are robust under $\mathcal{C}^1$-perturbations.
The only condition that must be checked is that the
$\mathcal{C}^1$-closeness between the two PDE's is translated into
a $\mathcal{C}^1$-closeness in $\mathcal K$ of the corresponding
ODE.

An example corresponding to the first situation is the
Fisher-Kolmogorov PDE, $u_t=u_{xx}+u(1-u),$  where the existence of
several  TWS of front type with different speeds is associated to the existence
of a homoclinic connection between a hyperbolic saddle and a node,
see \cite{Chi2006,GasGiaTor2013} and references therein.  Therefore, all PDE of
the form $u_t=u_{xx}+u(1-u)+\varepsilon g(u,u_x,u_t,\varepsilon)$  for $\varepsilon$ small enough 
have such type of TWS. In fact, the same result
holds for many perturbed Fisher-Kolmogorov PDE with a perturbation 
term of the form 
$\varepsilon
g(u,u_x,u_t,u_{xx},u_{xt},u_{tt},\varepsilon)$.  
As  a
second example of the first situation mentioned above, for some PDE of the form $u_t=u_{xx}+h(u)u_x+g(u)$ 
there are periodic TWS  which are associated to the existence of hyperbolic limit
cycles, see for instance \cite{Cor1993,Man2003} and the references
therein.

\smallskip

In this paper we address   the second, more delicate, situation.
More specifically, we consider several  PDE having a  \emph{continuum of periodic}
TWS associated to a center of a second order ODE associated to the
PDE, and we study which conditions have to be imposed on the 
perturbation of the PDE to be able to ensure that TWS persist and to quantify them.

We split our main results into two theorems, which we state in Section \ref{se:defsres} after giving  some preliminary definitions and notations. Our first
result  deals with second order PDE, see Theorem \ref{th:main1}, and
applies to a wide range of equations. Our second result, Theorem
\ref{th:main2},  is more restrictive on the one hand because it only
considers some special perturbations, but on the other hand it applies
to higher order PDE.
In Section \ref{se:abel} we study a particular class of Abelian
integrals that will often appear in the analysis of the perturbations in Section \ref{se:appl}. For these Abelian integrals our main result is given in Theorem \ref{th:general}.
Finally, in Section \ref{se:appl} we detail some applications of our
results. First, in Section \ref{ss:m2}, we apply Theorem~\ref{th:main1} to perturbations of
 TWS of second order equations such as  the Ostrovsky, Klein-Gordon and
sine-Gordon equations. Afterwards, in Section~\ref{ss:mb2} we use
Theorem~\ref{th:main2} to study perturbations of higher order
PDE given by  the Korteweg-de Vries, Rosenau-Hyman,
 Camassa-Holm, and Boussinesq equations.

\section{Definitions and main results}
\label{se:defsres}

Consider $m$-th order partial differential equations
 of the form
\begin{equation}\label{eq:edp}
P\Big(u,\frac{\partial u}{\partial x}, \frac{\partial u}{\partial
t}, \frac{\partial^2 u}{\partial x^2}, \frac{\partial^2 u}{\partial
x\partial t},\frac{\partial^2 u}{\partial t^2},\ldots,
\frac{\partial^m u}{\partial x^m},\frac{\partial^m u}{\partial
x^{m-1}\partial t},\ldots,\frac{\partial^m u}{\partial
t^m},\varepsilon\Big)=0,
\end{equation}
where $\mathcal{W}\subset\R^{(m+1)(m+2)/2}$ is an open set,
$\mathcal{I}$ is an open interval containing $0,$
$P:\mathcal{W}\times \mathcal{I}\to\R$ is a  sufficiently smooth function and
$\varepsilon$ is a small parameter. Recall that the traveling wave
solutions  of \eqref{eq:edp} are particular solutions of the form
$u=U(x-ct)$ where $U(s)$ is defined for all $s\in\R$ and satisfies
certain boundary conditions at infinity. It is well-known that the
existence of such solutions is equivalent to finding solutions defined
for all $s$ of the $m$-th order ordinary differential equation
\begin{multline}\label{eq:pc} P_c(U,U',U'',\ldots,U^{(m)},\varepsilon):=
\\P\big(U,U',-cU',U'',-cU'',c^2U'',\ldots, U^{(m)},-c
U^{(m)},\ldots, (-c)^mU^{(m)},\varepsilon\big)=0,\end{multline}
satisfying these conditions. Here the prime denotes derivative with
respect to $s$ and $P_c:\mathcal{W}_c(\varepsilon)\times I\to\R,$
where $\mathcal{W}_c(\varepsilon)$ is an open subset of $\R^{m+1}$.

We will distinguish two cases according to whether \eqref{eq:edp} is a
second order equation ($m=2$) or a higher order equation ($m>2$).

\smallskip

\noindent{\bf Second order equations.} Our main result applies to a certain class of perturbed PDE that satisfy three conditions (i)--(iii) that we detail below. Succinctly, it requires the existence of a certain  wave speed $c\in\mathbb{R}$ such that: (i) the associated ODE has the form $U''=f_c(U,U')+\varepsilon g_c(U,U',\varepsilon)$; (ii) after a time reparameterization if necessary the planar system associated with this ODE can be written as a perturbation of a Hamiltonian system; and (iii) this Hamiltonian system has a center, and the Melnikov-Poincar\'{e}-Pontryagin function associated with the perturbation has $\ell$ simple zeroes, see \cite[Part II]{ChiLi2007} for further details.

More precisely, we will say that the PDE \eqref{eq:edp}
with $m=2$ satisfies \emph{Property}~$\mathcal{A}$ if there exists $c\in\mathbb{R}$ such that the following three
conditions hold:
\begin{enumerate}[(i)]
\item There  exist  $\mathcal{C}^1$ functions
$f_c:\mathcal{V}_c(\varepsilon)\to\R$ and
$g_c:\mathcal{V}_c(\varepsilon)\times \mathcal{I}\subset\R^3\to\R,$
with $\mathcal{V}_c(\varepsilon)\subset\R^2$ and
$\mathcal{V}_c(\varepsilon)\times \mathcal{I}\subset\R^3$ open sets,
such that,
 for $\varepsilon$ small enough,
\begin{multline*} \qquad\{(x,y)\in\mathcal{V}_c(\varepsilon)\,:\,
z=f_c(x,y)+\varepsilon g_c(x,y,\varepsilon)\}\\\subset
\{(x,y,z)\in\mathcal{W}_c(\varepsilon)\,:\,
P_c(x,y,z,\varepsilon)=0\}.
\end{multline*}
 Moreover, if $\mathcal {U}_c$ is the limit of the sets
$\mathcal{V}_c(\varepsilon)$ when $\varepsilon\to0,$ the only
solution of  $f_c(x,0)=0$ in $\mathcal {U}_c$ is $x=x_c.$

\item There exists a $\mathcal{C}^2$ function
$H_c:\mathcal{V}_c\subset\R^2\to \R^+\cup\{0\}$ such that
$H_c(x_c,0)=0,$
\[
\frac{\partial H_c(x,y)}{\partial y} = \frac{y}{s_c(x,y)},\quad
\frac{\partial H_c(x,y)}{\partial x} = -\frac{f_c(x,y)}{s_c(x,y)},
\]
for some $\mathcal{C}^1$ function $s_c:\mathcal{V}_c\subset\R^2\to \R^+.$
Notice that   $s_c$ is
such that
\[
\frac{\partial}{\partial
x}\left(\frac{y}{s_c(x,y)}\right)+\frac{\partial}{\partial
y}\left(\frac{f_c(x,y)}{s_c(x,y)}\right)\equiv 0.
\]

\item For each $h\in(0,\overline h_c),$ where $\overline h_c\in\R^+\cup\{\infty\},$ the set
\[\gamma_c(h):=\{(x,y)\in\mathcal{V}_c\,:\, H_c(x,y)=h
\}\] is a closed oval surrounding $(x_{c},0)$ and the function $M_c:
(0,\overline h_c)\to \R,$ defined as the line integral
\[
M_c(h)=\int_{\gamma_c(h)} \frac{g_c(x,y,0)}{s_c(x,y)}\, dx,
\]
has $\ell\ge 1$ simple zeroes in $(0,\overline h_c).$
\end{enumerate}

\begin{thmx}\label{th:main1} Assume that the second order PDE
\begin{equation}\label{eq:edp1}
P(u,u_t,u_x,u_{tt},u_{tx},u_{xx},\varepsilon) =0,
\end{equation}
satisfies Property $\mathcal{A}$ for some $c\in\R.$  Then:
\begin{enumerate}[(a)]
\item For $\varepsilon=0$ the PDE \eqref{eq:edp1} has a continuum of periodic TWS,
$u=U_h(x-ct),$ for $h$ in an open real interval.

\item For $\varepsilon$ small enough it has at least $\ell$ periodic
TWS, $u=U_{h_j}(x-ct,\varepsilon), j=1,2,\ldots,\ell.$
\end{enumerate}
\end{thmx}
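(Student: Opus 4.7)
The plan is to read Property~$\mathcal{A}$ as a recipe: part~(i) rewrites the TWS profile equation as a scalar second-order ODE with a unique relevant equilibrium, part~(ii) supplies, after a strictly positive time reparameterisation, a perturbed Hamiltonian planar system, and part~(iii) hands us the Melnikov--Poincar\'{e}--Pontryagin data. Once the setup is in standard perturbed-Hamiltonian form, part~(a) is immediate from the existence of a period annulus at $\varepsilon=0$, and part~(b) follows from the classical first-order Melnikov theorem.

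Concretely, I would start from the traveling wave ansatz $u(x,t)=U(x-ct)$, which turns \eqref{eq:edp1} into \eqref{eq:pc} with $m=2$. Property~$\mathcal{A}$(i) then exhibits the TWS as solutions of
\[
U''=f_c(U,U')+\varepsilon\,g_c(U,U',\varepsilon),
\]
equivalently, with $(x,y)=(U,U')$, of the planar system $x'=y$, $y'=f_c(x,y)+\varepsilon g_c(x,y,\varepsilon)$. Since Property~$\mathcal{A}$(ii) gives $s_c>0$ on $\mathcal{V}_c$, dividing the vector field by $s_c$ is a legitimate time reparameterisation that does not change orbits as point sets; combined with the explicit formulas for $\partial_x H_c$ and $\partial_y H_c$ this yields
\[
\dot x=\frac{\partial H_c}{\partial y},\qquad \dot y=-\frac{\partial H_c}{\partial x}+\varepsilon\,\frac{g_c(x,y,\varepsilon)}{s_c(x,y)},
\]
a perturbation of a Hamiltonian system with Hamiltonian~$H_c$.

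For part~(a), set $\varepsilon=0$: the system above is Hamiltonian and, by Property~$\mathcal{A}$(iii), for every $h\in(0,\overline h_c)$ the level set $\gamma_c(h)$ is a closed oval surrounding the equilibrium $(x_c,0)$. Each oval is a periodic orbit; it produces a nontrivial periodic profile $U_h$ solving $U''=f_c(U,U')$, and hence a periodic TWS $u=U_h(x-ct)$. The family $\{U_h\}_{h\in(0,\overline h_c)}$ is the required continuum. For part~(b), read off the perturbation as $(p,q)=\bigl(0,\,g_c(x,y,0)/s_c(x,y)\bigr)$ and compute the first-order Melnikov--Poincar\'{e}--Pontryagin function along $\gamma_c(h)$:
\[
\oint_{\gamma_c(h)}\bigl(q\,dx-p\,dy\bigr)=\int_{\gamma_c(h)}\frac{g_c(x,y,0)}{s_c(x,y)}\,dx=M_c(h),
\]
which is exactly the function supplied by Property~$\mathcal{A}$(iii). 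By the classical theorem (see \cite[Part~II]{ChiLi2007}), every simple zero $h^*$ of $M_c$ in $(0,\overline h_c)$ produces, for $|\varepsilon|$ sufficiently small, a hyperbolic limit cycle of the perturbed planar system that bifurcates from $\gamma_c(h^*)$ and depends continuously on $\varepsilon$. The $\ell$ simple zeroes furnished by~(iii) thus yield $\ell$ persistent periodic orbits, each lifting to a periodic TWS $u=U_{h_j}(x-ct,\varepsilon)$.

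The main difficulty is really only bookkeeping: one must verify that Property~$\mathcal{A}$(i) parameterises \emph{every} TWS that sits in $\mathcal{V}_c(\varepsilon)$ so that counting orbits of the planar ODE really counts TWS of the PDE; that the time rescaling by $s_c$ preserves the compact ovals $\gamma_c(h)$ (immediate because $s_c>0$ and $H_c$ is unaffected by the rescaling); and that the classical Melnikov formula applied to the rescaled system does reproduce the line integral defining $M_c$. None of these is deep, but each needs to be checked to legitimately import the Melnikov--Poincar\'{e}--Pontryagin conclusion into the PDE setting.
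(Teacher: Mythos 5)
Your proposal is correct and follows essentially the same route as the paper: rewrite the profile equation as a planar system via Property $\mathcal{A}$(i), rescale time by $s_c>0$ to reach the perturbed Hamiltonian form, read off the period annulus for part (a), and identify the Melnikov--Poincar\'{e}--Pontryagin function of the rescaled system with $M_c$ to get part (b) from the classical simple-zero criterion. The only difference is presentational: you flag the bookkeeping checks (that the rescaling preserves orbits and that the Melnikov integral reproduces $M_c$) explicitly, which the paper carries out implicitly in its displayed computation.
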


\begin{proof}[Proof of Theorem \ref{th:main1}] From the discussion at the very beginning of this section,
a function  $U(s)$ is a TWS for the PDE \eqref{eq:edp}  if it is
defined for all time and
\begin{equation}\label{eq:fc}
P_c(U(s),U'(s),U''(s),\varepsilon)=0,
\end{equation}
where $P_c$ is defined in \eqref{eq:pc}. By using  (i) of Property
$\mathcal{A}$ we can write the above expression as
\[
U''(s)=f_c(U(s),U'(s))+\varepsilon g_c (U(s),U'(s),\varepsilon),
\]
for some suitable $f_c$ and $g_c.$ In other words,
$(x,y)=(U(s),U'(s))$ is a solution of the planar ODE
\[
  \left\{\!
   \begin{array}{l}
     x'=\dfrac{dx}{ds}=y, \\[10pt] y'=\dfrac{dx}{ds}=f_c(x,y)+\varepsilon g_c(x,y,\varepsilon).
   \end{array}
  \right.
\]
By item (ii) of Property $\mathcal{A}$ we can parameterize $U$ by a new
time, say $\tau,$ with $d\tau/d s= s_c(x,y),$ and then $x=U(\tau)$
satisfies the equivalent planar ODE
\begin{equation}\label{eq:ham-per}
 \begin{cases}
     \dot x=\dfrac{dx}{d\tau}=\dfrac{dx}{d\tau}\dfrac{d\tau}{ds}=\dfrac{y}{s_c(x,y)}=\dfrac{\partial H_c(x,y)}{\partial y},
     \\[10pt] \dot y=\dfrac{dy}{d\tau}=\dfrac{dy}{d\tau}\dfrac{d\tau}{ds}=\dfrac{f_c(x,y)}{s_c(x,y)}+\varepsilon \dfrac{g_c(x,y,\varepsilon)}{s_c(x,y)}
     =-\dfrac{\partial H_c(x,y)}{\partial x}
     +\varepsilon \dfrac{g_c(x,y,\varepsilon)}{s_c(x,y)}.
 \end{cases}
\end{equation}
When $\varepsilon=0$ the above system is Hamiltonian, and by (i) and
(iii) of Property $\mathcal{A}$ the continuum of curves $\gamma_c(h)$ for 
$0<h<\overline h$  are periodic orbits of system \eqref{eq:ham-per} with
$\varepsilon=0$ that surround the center $(x_c,0).$ The functions
$U_h(s,c)=x_h(\tau(s),c),$  where $(x_h(\tau,c),y_h(\tau,c))$ is the
parameterization of $\gamma_c(h)$, give rise to the continuum of periodic
traveling wave solutions of \eqref{eq:edp1}.

When $\varepsilon\ne0$ is small enough we are in the setting of the
perturbations of Hamiltonian systems,
\cite{ChiLi2007,DumLliArt2006}. Recall that for general perturbed
$\mathcal{C}^1$ Hamiltonian systems,
\begin{equation}\label{eq:gen}
 \begin{cases}
     \dot x=\phantom{-}\dfrac{\partial H(x,y)}{\partial y}+\varepsilon R(x,y,\varepsilon),
     \\[10pt] \dot y=-\dfrac{\partial H(x,y)}{\partial x}
     +\varepsilon S(x,y,\varepsilon),
 \end{cases}
\end{equation}
its associated Melnikov-Poincar\'{e}-Pontryagin function is
\[
M(h)=\int _{\gamma(h)} S(x,y,0)\,dx-R(x,y,0)\,dy,
\]
where the curves $\gamma(h)$ form a continuum of ovals contained in
$\{H(x,y)=h,$ for $h\in (h_0,h_1)\}.$ Then, it is known that each
simple zero $ h^*\in (h_0,h_1)$ of $M$ gives rise to a limit cycle
of \eqref{eq:gen} that tends to $\gamma(h^*)$ when $\varepsilon\to 0$.
For system \eqref{eq:ham-per}, $M(h)=M_c(h)$ and so,  each simple zero
$h_j\in(0,\overline h_c),$ $j=1,2,\ldots,\ell$ of  $M_c(h)$ gives
rise to a limit cycle of system \eqref{eq:ham-per}. Each of these
limit cycles correspond to a periodic TWS  of \eqref{eq:edp1}.
\end{proof}

\smallskip

\noindent{\bf Higher order equations ($m>2$).} In this situation our approach only
works for a particular  class of differential equations. Again,
fixed $(c,k)\in\R^2,$ we will define a property similar to Property
$\mathcal{A}$ which 
will consist of four conditions. The first one, that we will call condition (o), is the most
restrictive one and it is totally different to the ones imposed when
$m=2.$ It states that the associated ODE can somehow be reduced to a second order equation, or that some of the solutions of the associated ODE are also solutions of a related second order ODE. The rest of the conditions are quite similar to the ones of the
planar case. 

More precisely, we say that a PDE satisfies \emph{Property} $\mathcal{B}$ if there exist $c,k\in\mathbb{R}$ such that: 
\begin{enumerate}
\item [(o)] There exists a function
$Q_c:\mathcal{W}_c\times\mathcal{I}\to\R,$ where
$\mathcal{W}_c\subset\R^3$ is open and $Q_c$ is sufficiently smooth,  such that
\[
\frac{d^{m-2}}{ds^{m-2}}\big(Q_c(U,U',U'',\varepsilon)\big)=
P_c(U,U',U'',\ldots, U^{(m)},\varepsilon),
\]
where $P_c$ is defined in \eqref{eq:pc} and $U=U(s).$

\item [(i)] There  exist  two $\mathcal{C}^1$  functions
$f_{c,k}:\mathcal{V}_{c,k}(\varepsilon)\to\R$ and
$g_{c,k}:\mathcal{V}_{c,k}(\varepsilon)\times \mathcal{I}\to\R$ with
$\mathcal{V}_{c,k}(\varepsilon)\subset\R^2$ and
$\mathcal{V}_{c,k}(\varepsilon)\times \mathcal{I}\subset\R^3$ open
sets, such that,
 for $\varepsilon$ small enough,
\begin{multline*} \qquad\{(x,y)\in\mathcal{V}_{c,k}(\varepsilon)\,:\,
z=k+f_{c,k}(x,y)+\varepsilon g_{c,k}(x,y,\varepsilon)\}\\\subset
\{(x,y,z)\in\mathcal{W}_{c,k}(\varepsilon)\,:\,
Q_c(x,y,z,\varepsilon)=k\}.
\end{multline*}
 Moreover, if $\mathcal {U}_{c,k}$ is the limit of the sets
$\mathcal{V}_{c,k}(\varepsilon)$ when $\varepsilon\to0,$ the only
solution of  $f_c(x,0)=k$ in $\mathcal{V}_{c,k}$ is $x=x_{c,k}.$

\item [(ii)] There exists a $\mathcal{C}^2$ function
$H_{c,k}:\mathcal{V}_{c,k}\subset\R^2\to \R^+\cup\{0\}$ such that
$H_{c,k}(x_{c,k},0)=0,$
\[
\frac{\partial H_{c,k}(x,y)}{\partial y} =
\frac{y}{s_{c,k}(x,y)},\quad \frac{\partial H_{c,k}(x,y)}{\partial
x} = -\frac{f_c(x,y)+k}{s_{c,k}(x,y)},
\]
for some $\mathcal{C}^1$ function $s_{c,k}:\mathcal{V}_{c,k}\subset\R^2\to
\R^+$. Notice that the
function $s_{c,k}$ is such that
\[
\frac{\partial}{\partial
x}\left(\frac{y}{s_{c,k}(x,y)}\right)+\frac{\partial}{\partial
y}\left(\frac{f_c(x,y)+k}{s_{c,k}(x,y)}\right)\equiv0.
\]

\item [(iii)] For each $h\in(0,\overline h_{c,k}),$ where $\overline h_{c,k}\in\R^+\cup\{\infty\},$  the set
\[\gamma_{c,k}(h):=\{(x,y)\in\mathcal{V}_{c,k}\,:\, H_{c,k}(x,y)=h
\}\] is a closed oval surrounding $(x_{c,k},0)$ and the function
$M_{c,k}: (0,\overline h_{c,k})\to \R,$ defined as the line integral
\[
M_{c,k}(h)=\int_{\gamma_{c,k}(h)}
\frac{g_{c,k}(x,y,0)}{s_{c,k}(x,y)}\, dx,
\]
has $\ell\ge1$ simple zeroes in $(0,\overline h_{c,k}).$
\end{enumerate}

\begin{thmx}\label{th:main2} Assume that the $m$-th order PDE \eqref{eq:edp}, with $m>2,$ satisfies Property $\mathcal{B},$ for some
$c\in\R$ and $k\in\R.$  Then:
\begin{enumerate}[(a)]
\item For $\varepsilon=0$ the PDE \eqref{eq:edp1} has a continuum of periodic TWS,
$u=U_{h,k}(x-ct)$ for $h$ in an open real interval.

\item For $\varepsilon$ small enough it has at least $\ell$ periodic
TWS, $u=U_{h_j,k}(x-ct,\varepsilon)$ for $j=1,2,\ldots,\ell.$
\end{enumerate}
\end{thmx}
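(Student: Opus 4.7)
The plan is to reduce the $m$-th order problem to a planar one by means of condition (o) of Property $\mathcal{B}$, and then to repeat essentially verbatim the argument used in the proof of Theorem \ref{th:main1}. TWS of \eqref{eq:edp} with speed $c$ correspond exactly to solutions $U(s)$ of the ODE $P_c(U,U',\ldots,U^{(m)},\varepsilon)=0$ defined for all $s\in\mathbb{R}$. The key new observation, supplied by (o), is that any $U$ satisfying the \emph{second order} equation $Q_c(U,U',U'',\varepsilon)=k$ automatically satisfies $P_c=0$, since differentiating the constant $k$ a total of $m-2$ times yields zero on the left and hence $P_c=0$ on the right. This embeds the persistence problem into a planar setting, at the cost of restricting attention to a single level set $\{Q_c=k\}$.

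With this reduction in hand, condition (i) lets me rewrite $Q_c=k$ as $U''=k+f_{c,k}(U,U')+\varepsilon g_{c,k}(U,U',\varepsilon)$, so that $(x,y)=(U,U')$ solves a planar perturbation of an autonomous system. Using condition (ii), the reparameterization $d\tau/ds=s_{c,k}(x,y)>0$ transforms this into a perturbed Hamiltonian system with Hamiltonian $H_{c,k}$ and perturbation term $\varepsilon\,g_{c,k}/s_{c,k}$ in the $\dot y$ equation, fully analogous to system \eqref{eq:ham-per}. When $\varepsilon=0$, the ovals $\gamma_{c,k}(h)$ provided by condition (iii) are periodic orbits surrounding the center $(x_{c,k},0)$, and after undoing the time change they produce the continuum of periodic TWS $u=U_{h,k}(x-ct)$ asserted in part (a). For part (b), I would apply the standard Melnikov-Poincar\'e-Pontryagin theorem to this Hamiltonian perturbation; its Melnikov function is precisely the integral $M_{c,k}(h)$ from condition (iii), and each of the $\ell$ simple zeros $h_j\in(0,\overline h_{c,k})$ yields a hyperbolic limit cycle of the perturbed planar system for $\varepsilon$ small enough. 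Each such limit cycle lifts via the embedding $\{Q_c=k\}\subset\{P_c=0\}$ to a periodic TWS $u=U_{h_j,k}(x-ct,\varepsilon)$ of the original PDE.

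The only substantive departure from the second order case is the initial invocation of (o); after that, the proof is a transcription of that of Theorem \ref{th:main1} with every object carrying an additional $k$ subscript. The main conceptual point, and the only place one might hesitate, is recognizing that restriction to the level set $\{Q_c=k\}$ suffices for the statement: one is not claiming to recover \emph{all} TWS of the original higher order PDE, only those that lie on some such level set. This is exactly what Property $\mathcal{B}$ is tailored to produce, with $c$ and $k$ playing the role of the two continuous parameters labelling the recoverable waves.
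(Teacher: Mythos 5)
Your proposal is correct and follows exactly the route of the paper's own proof: invoke condition (o) to replace $P_c=0$ by the second order equation $Q_c(U,U',U'',\varepsilon)=k$ (noting that solutions of the latter solve the former since the $(m-2)$-th derivative of a constant vanishes), and then repeat the proof of Theorem~\ref{th:main1} verbatim with the extra subscript $k$. Your explicit remark that one only recovers the TWS lying on a single level set $\{Q_c=k\}$ is precisely the point the paper makes when it observes that other values of $k$ give different TWS with the same speed.
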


\begin{proof}[Proof of Theorem \ref{th:main2}]
From  condition (o) of  Property $\mathcal{B},$  if we restrict our
attention to the solutions of \eqref{eq:pc} contained in
\begin{equation}\label{eq:qc}
Q_c(U,U',U'',\varepsilon)=k,
\end{equation}
for the given value of $k\in\R,$ we can find some TWS with speed $c$
and associated to this particular value of $k.$ Other values of $k$
give different TWS with the same speed.

Starting from equation \eqref{eq:qc}, instead of equation
\eqref{eq:fc}, we can repeat all the steps of the proof of Theorem
\ref{th:main1}, point by point, to get the desired conclusion.
\end{proof}

\section{Some particular Abelian integrals}\label{se:abel}

This section is devoted to studying a particular class of Abelian
integrals for which we prove a result quantifying their zeros, see Theorem \ref{th:general}. We will use this result in the next section when we study the
persistence of TWS for several perturbed PDE, which is governed by the number of zeros of integrals of this type.

\begin{proposition}\label{pr:zeroes} Let $A,B$ and $D$ be analytic
functions, defined in an open interval   $\mathcal{I}\subset\R$ and
such that
\begin{align*}
A(x)=&a^2+O(x-x^*),\qquad B(x)=\frac{(x-x^*)^2}{b^2}+O\big((x-x^*)^3\big),\\
D(x)=&(x-x^*)^{2n}D_0(x)\quad\mbox{with}\quad D_0(x)=d+O(x-x^*),
\end{align*}
for certain $x^*\in\mathcal{I},$ where $a,b,c$ are real constants with $abd\ne0$ and
$n\in\N\cup\{0\}.$ Consider the Hamiltonian function
$H(x,y)=A(x)y^2+B(x).$ Then, the following holds:
\begin{enumerate}[(a)]
\item The Hamiltonian system $\dot x=H_y(x,y),$ $\dot y=-H_x(x,y),$
has a center at $(x^*,0).$ We will denote by $\gamma(h)$ the
periodic orbits contained in $\{H(x,y)=h\},$ which exist when
$h\in(0,\widetilde h)$ for some $\widetilde{h}\in\R.$

\item For $h\in(0,\widetilde h)$ and $p,n\in\N,$ define the Abelian integral
\begin{equation}\label{eq:integralJ}
J_p(h)=\int_{\gamma(h)}  D(x) y^p \,dx.
\end{equation}
Then $J_{2p}(h)\equiv0$ and
\[
J_{2p-1}(h)\sim \frac{2d b^{2n+1}}{a^{2p-1}}
\frac{(2p-1)!!\,(2n-1)!!\,\pi}{2^{p+n}(p+n)!}\,h^{p+n} \,
\mbox{ at }\, h=0^+,\] where $(2k-1)!!=(2k-1)(2k-3)\cdots 3\cdot 1$
and $(-1)!!=1!!=1.$
\end{enumerate}

\end{proposition}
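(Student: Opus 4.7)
\textbf{Proof plan for Proposition \ref{pr:zeroes}.}

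For part (a) I would simply verify that $(x^*,0)$ is a non-degenerate minimum of $H$. Computing $H_y(x,y)=2A(x)y$ and $H_x(x,y)=A'(x)y^2+B'(x)$, one sees from the hypotheses that both vanish at $(x^*,0)$, and the Hessian at that point is $\operatorname{diag}(2/b^2,2a^2)$, which is positive definite. Hence $H$ has a strict local minimum, the level curves $\{H=h\}$ are closed ovals for small $h>0$, and the Hamiltonian flow has a center at $(x^*,0)$.

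For part (b), the identity $J_{2p}(h)\equiv 0$ follows from the $y\mapsto -y$ symmetry of the oval $\gamma(h)$: the upper half and lower half contribute equal values of the integrand $D(x)y^{2p}$ but opposite orientations in $x$, so they cancel. For the odd case, parametrizing each half of $\gamma(h)$ by $y=\pm\sqrt{(h-B(x))/A(x)}$ (which is well defined near the center where $A>0$) and using the Hamiltonian-flow orientation, one obtains
\[
J_{2p-1}(h)=2\int_{x_{-}(h)}^{x_{+}(h)} D(x)\left(\frac{h-B(x)}{A(x)}\right)^{p-1/2}dx,
\]
where $x_{\pm}(h)$ are the two roots of $B(x)=h$ near $x^*$.

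The core of the asymptotic analysis is the rescaling $x=x^*+b\sqrt{h}\,t$. Substituting the hypotheses on $A$, $B$, $D$ into the integrand yields
\[
h-B(x)=h(1-t^{2})+O(h^{3/2}),\qquad A(x)=a^{2}+O(\sqrt{h}),\qquad D(x)=b^{2n}h^{n}t^{2n}\bigl(d+O(\sqrt h)\bigr),
\]
together with $dx=b\sqrt{h}\,dt$. The endpoints $x_{\pm}(h)$ correspond to $t=\pm 1+O(\sqrt{h})$, so to leading order the integration runs over $[-1,1]$. Collecting the powers of $h$ and the constants produces
\[
J_{2p-1}(h)\sim \frac{2d\,b^{2n+1}}{a^{2p-1}}\,h^{p+n}\int_{-1}^{1} t^{2n}(1-t^{2})^{p-1/2}\,dt \qquad \text{as } h\to 0^+.
\]

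It then remains to evaluate the one-variable integral: the substitution $u=t^{2}$ turns it into a Beta function, giving $\int_{-1}^{1}t^{2n}(1-t^{2})^{p-1/2}dt=B(n+\tfrac12,p+\tfrac12)=\Gamma(n+\tfrac12)\Gamma(p+\tfrac12)/\Gamma(n+p+1)$, and the identity $\Gamma(k+\tfrac12)=(2k-1)!!\sqrt{\pi}/2^{k}$ converts this into $(2n-1)!!(2p-1)!!\,\pi/(2^{p+n}(p+n)!)$, which is exactly the claimed constant. I expect the main (though routine) obstacle to be a clean justification that the $O(h^{1/2})$ corrections in the integrand and at the endpoints contribute only lower-order terms; this can be handled by analyticity together with the dominated convergence theorem applied after the rescaling, since $t^{2n}(1-t^{2})^{p-1/2}$ is integrable on $[-1,1]$ and the correction factors tend uniformly to $1$ on compact subsets of $(-1,1)$, with integrable envelope near the endpoints.
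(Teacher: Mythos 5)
Your proposal is correct and reaches the stated constant by the same overall strategy as the paper: kill the even case by the $y\mapsto -y$ symmetry, reduce the odd case to twice an integral over the segment $[x_-(h),x_+(h)]$, rescale, and land on a Beta-type integral. Two technical points differ, and they are worth comparing. First, where you rescale $x=x^*+b\sqrt{h}\,t$ and must control the $O(\sqrt{h})$ corrections together with the moving endpoints $t=\pm1+O(\sqrt{h})$, the paper sets $h=w^2$ and invokes the Weierstrass preparation theorem to factor $w^2-B(x)=(x-x^-(w))(x^+(w)-x)U(x,w)$ exactly, then maps $[x^-(w),x^+(w)]$ affinely onto the fixed interval $[0,1]$; this turns the integrand into a function $F(z,w)$ continuous on $[0,1]$ for small $w$, so the limit interchange is immediate and there is no endpoint bookkeeping at all. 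Your route works, but the uniform-integrability step you flag is precisely where the care is needed (extending the integrand by zero to a fixed interval before applying dominated convergence is the cleanest fix, and note that for $p\ge 1$ the factor $(1-t^2)^{p-1/2}$ is bounded, so the ``envelope'' is trivial and only the varying domain matters). Second, you evaluate $\int_{-1}^{1}t^{2n}(1-t^2)^{p-1/2}\,dt$ in one stroke via $u=t^2$ and the Beta function, whereas the paper evaluates the equivalent integral $K(p,n)=\int_0^1\big(z(1-z)\big)^{(2p-1)/2}\big(z-\tfrac12\big)^{2n}\,dz$ by the integration-by-parts recursion $K(p,n)=\tfrac{2p-1}{2n+1}K(p-1,n+1)$ plus induction, using the Beta function only for the base case $n=0$; your direct evaluation is shorter, and the two integrals match under $z=(t+1)/2$. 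Part (a) is also fine and is essentially the paper's argument: nondegeneracy of the critical point (positive definite Hessian, equivalently $\det(\mathrm{D}X)=4a^2/b^2>0$) plus the fact that a Hamiltonian equilibrium cannot be a focus or a node.
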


\begin{proof} 

Without loss of generality we will assume that
$x^*=0.$ To prove $(a)$, notice that the origin is a non-degenerate singular point of the vector field $X=(H_y,-H_x)$ because $\det(\mathrm{D}X(0,0))=2A(0)B''(0)=4a^2/b^2>0$. Moreover, since a singular point of a Hamiltonian system can neither be a focus nor a node, it is a center.

To study the Abelian integral $J_p$ it is convenient to introduce
the new variable $w$ as $h=w^2.$ Then, by the Weierstrass preparation
theorem, see for instance \cite{AndroLeontGor1973,ChoHal1982}, in a
neighbourhood of $(0,0)$ the only solutions of equation
$B(x)-w^2=x^2/b^2-w^2+O(x^3)=0$ are
 \[
x=x^{\pm}(w)= \pm b w +O(w^2),
\]
where $x^{\pm}(w)$ are analytic functions at zero. Moreover, in this
neighbourhood,
\begin{equation}\label{eq:wpt}
 w^2-B(x)=(x-x^-(w))(x^+(w)-x) U(x,w),
\end{equation}
where $U(0,0)=1/b^2$ is also analytic at $(0,0).$ Notice that the points of the 
oval $\gamma(h)$ satisfy $y=\pm\sqrt{(w^2-B(x))/A(x)}.$  When $p$ is even the integral 
\eqref{eq:integralJ} vanishes because of symmetry with respect to $y=0$. Hence 
\begin{equation*}
J_p(w^2)=
\begin{cases}
0, \quad &\quad\mbox{when}\quad p\quad\mbox{is even},\\[3pt]
2\mathlarger{\int_{x^-(w)}^{x^+(w)}}
D(x)\left(\dfrac{w^2-B(x)}{A(x)}\right)^{\frac p 2}\, dx,
&\quad\mbox{when}\quad  p\quad\mbox{is odd}.
\end{cases}
\end{equation*}

By using \eqref{eq:wpt}
we get that
\begin{align*}
J_{2p-1}(w^2)=&2\int_{x^-(w)}^{x^+(w)}
D(x)\left(\dfrac{w^2-B(x)}{A(x)}\right)^{\frac{2p-1}2}\,
dx\\=&2\int_{x^-(w)}^{x^+(w)}
\big((x-x^-(w))(x^+(w)-x)\big)^{\frac{2p-1}2}
D(x)\left(\dfrac{U(x,w)}{A(x)}\right)^{\frac{2p-1}2}\, dx\\=& 2
(\Delta(w))^{2p}\int_0^1  \big(z(1-z)\big)^{\frac{2p-1}2}
\overline{D}(z,w)\left(\dfrac{\overline{U}(z,w)}{\overline{A}(z,w)}\right)^{\frac{2p-1}2}\,
dz,
\end{align*}
where in the integral we have introduced the change of variables
$z=(x-x^-(w))/\Delta(w),$  being $\Delta(w)=x^+(w)-x^-(w),$ and for
any function $E(x,w)$  or $E(x),$ we denote $\overline E(z,w)=
E\big(\Delta(w)z+x^-(w),w\big)$ or $\overline E(z,w)=E(\Delta(w)z+x^-(w))$. In particular,
\[
\overline
D(z,w)=\big(\Delta(w)z+x^-(w)\big)^{2n}\overline{D}_0(z,w)=(\Delta(w))^{2n}\left(z+\frac{x^-(w)}{\Delta(w)}\right)^{2n}\overline{D}_0(z,w),
\]
with $\overline{D}_0(0,0)=d.$ Hence,
\[
J_{2p-1}(w^2)=(\Delta(w))^{2p+2n}\int_0^1
\big(z(1-z)\big)^{\frac{2p-1}2} F(z,w)\, dz,
\]
where
\[
F(z,w)=
2\overline{D}_0(z,w)\left(z+\frac{x^-(w)}{\Delta(w)}\right)^{2n}\left(\dfrac{\overline{U}(z,w)}
{\overline{A}(z,w)}\right)^{\frac{2p-1}2}.
\]
Since $x^{\pm}(w)= \pm b w +O(w^2),$ it holds that $\Delta(w)=2b
w+O(w^2)$ and hence $\lim_{w\to0} \frac{x^-(w)}{\Delta(w)}=-\frac
12.$ Therefore for all $z\in[0,1]$ and $w$ small enough the function
$F(z,w)$  is continuous, and as a consequence
\begin{align*}
\lim_{w\to0}\frac{J_{2p-1}(w^2)}{w^{2p+2n}}&=\lim_{w\to0}\left(\frac{\Delta(w)}{w}\right)^{2p+2n}\int_0^1
\big(z(1-z)\big)^{\frac{2p-1}2} \lim_{w\to0} F(z,w)\, dz\\&=
(2b)^{2p+2n}\frac{2 d}{a^{2p-1} b^{{2p-1}}} \int_0^1
\big(z(1-z)\big)^{\frac{2p-1}2}\Big(z-\frac12\Big)^{2n}\,dz.
\end{align*}
Now we claim that
\[K(p,n):=
\int_0^1 \big(z(1-z)\big)^{\frac{2p-1}2} \Big(z-\frac12\Big)^{2n}\,
dz= \frac{(2p-1)!!\,(2n-1)!!}{8^{p+n}(p+n)!}\pi,
\] and we observe that, from this claim, 
the result follows. 

To prove the claim we observe that by using integration by parts, one easily gets that
\begin{equation}\label{e:parts}
K(p,n)=\frac{2p-1}{2n+1}K(p-1,n+1).
\end{equation} Now the claim follows by using induction. First we prove that for any $p\in\mathbb{N}_0$, $K(p,0)$ satisfies the claim. Indeed, if $\mathrm{B}$ is the Euler's Beta function, and since $ \mathrm{B}(x,y)=\Gamma(x)\Gamma(y)/\Gamma(x+y)$, we have
$$
K(p,0)=\int_0^1 z^{p-\frac{1}{2}}(1-z)^{p-\frac{1}{2}}dz= \mathrm{B}\left(p+\frac{1}{2},p+\frac{1}{2}\right)=\frac{(\Gamma(p+\frac{1}{2}))^2}{\Gamma(2p+1)}.
$$ 
Since $p$ is an integer number $\Gamma(2p+1)=(2p)!$. On the other hand, it is well known that $$
\Gamma\left(p+\frac{1}{2}\right)=\frac{(2p)!}{4^p\, p!}\sqrt{\pi}=\frac{(2p-1)!!}{2^p}\sqrt{\pi}.
$$ 
Hence, 
$$
K(p,0)= \frac{(2p-1)!!}{8^p\, p!}\pi
$$as we wanted to prove.

Now we assume that for $n>0$ and for all $p\in\mathbb{N}_0$, $K(p,n)$ satisfies 
the claim. By using the relation \eqref{e:parts}, we get that,
$$K(p,n+1)=\frac{2n+1}{2p+1}K(p+1,n)=\frac{(2n+1)\,(2p+1)!!\,(2n-1)!!}{(2p+1)\,8^{p+n+1}(p+n+1)!}\pi=$$
$$=\frac{(2p-1)!!\,(2n+1)!!}{8^{p+n+1}(p+n+1)!}\pi,$$
so the claim follows.
\end{proof}

Before proving the main result of this section, Theorem \ref{th:general}, and to motivate one of its hypotheses, we collect some simple
observations in the following lemma.

\begin{lemma}\label{le:lemma} Let $\gamma(h)\subset\{H(x,y)=h\}$, $h\in(0,\overline h)=\mathcal{L},$ be a continuum of
periodic orbits surrounding a center, corresponding to $h=0,$  of
the Hamiltonian system associated to a  $\mathcal{C}^1$ Hamiltonian function
$H(x,y)$ and assume that they have  a \emph{clockwise} time
parameterization. For  each $p,q\in\N\cup\{0\},$ consider the
Abelian integral
\[
J_{q,p}(h)=\int_{\gamma(h)} x^q y^p\,dx.
\]
The following holds.
\begin{enumerate}[(a)]
\item When $q$ is even and $p$ is odd then $J_{q,p}(h)>0$ for all
$h\in\mathcal{L}.$

\item When $q$ is even and $p$ is even and $H(x,y)=H(x,-y)$ then $J_{q,p}(h)\equiv0$
on the whole interval $\mathcal{L}.$

\item When $q$ is odd and $p$ is odd and $H(-x,y)=H(x,y)$ then $J_{q,p}(h)\equiv0$
on $\mathcal{L}.$
\end{enumerate}
\end{lemma}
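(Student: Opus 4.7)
My plan is to rewrite each line integral as a double integral via Green's theorem; once that is done, the three conclusions become one-line parity and symmetry checks on the resulting area integrand. Let $D(h)$ denote the bounded open region enclosed by $\gamma(h)$. Applied to the one-form $x^q y^p\,dx$, Green's theorem in its clockwise form gives
\[
J_{q,p}(h) \;=\; \oint_{\gamma(h)} x^q y^p\, dx \;=\; p\iint_{D(h)} x^q\, y^{p-1}\, dx\, dy.
\]
The factor $p$ comes from $\partial_y(x^q y^p)$, and the overall positive sign is produced by the clockwise orientation cancelling the minus in the standard counterclockwise identity $\oint_{\mathrm{ccw}} P\,dx = -\iint P_y\,dA$. (When $p=0$ both sides vanish trivially, so there is nothing to check in that case.)

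From this identity the three statements follow at once. For (a), $q$ even and $p$ odd make both $x^q$ and $y^{p-1}$ nonnegative throughout $D(h)$ and strictly positive off the coordinate axes (a set of measure zero), while $p\ge 1$; hence $J_{q,p}(h)>0$. For (b), the hypothesis $H(x,y)=H(x,-y)$ makes $D(h)$ invariant under $y\mapsto -y$, and the integrand $x^q y^{p-1}$ is odd in $y$ (since $p$ even means $p-1$ odd), so the area integral vanishes. For (c), $H(-x,y)=H(x,y)$ makes $D(h)$ invariant under $x\mapsto -x$, and $x^q y^{p-1}$ is odd in $x$ ($q$ odd, $p-1$ even), giving zero once more.

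The only delicate point is the orientation sign in Green's theorem: the lemma's clockwise parameterization is exactly what is needed so that (a) yields the correct positive sign; with the opposite convention $J_{q,p}$ would instead be strictly negative. Once orientation is tracked carefully, the three conclusions are immediate parity observations on the area integrand, and no fine analytic information about the Hamiltonian beyond its assumed symmetries is required.
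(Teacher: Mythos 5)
Your proof is correct and follows essentially the same route as the paper: apply Green's theorem (with the clockwise orientation supplying the positive sign) to rewrite $J_{q,p}(h)$ as $p\iint_{\operatorname{Int}(\gamma(h))} x^q y^{p-1}\,dx\,dy$, from which (a) follows by positivity of the integrand and (b), (c) by the symmetry of the region and the parity of $x^q y^{p-1}$. Your treatment is in fact slightly more explicit than the paper's about the orientation sign and the degenerate case $p=0$.
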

\begin{proof} Notice that by Green's  theorem 
\[ 
J_{q,p}(h)=\int_{\gamma(h)} x^q
y^p\,dx=\iint_{\operatorname{Int}(\gamma(h))} p x^q y^{p-1}\,
dx\,dy,
\]
where $\rm{Int}(\gamma(h))$ denotes the interior of the oval. 
Then, trivially $(a)$ follows. The other two statements are
consequence of the symmetries of $H$ and the function $x^q y^{p-1}.$
\end{proof}

The next proposition will be one of the key results to prove  Theorem \ref{th:general}, which is stated below.

\begin{proposition}\label{pr:li} {\rm (\cite{ColGasPro})} Set $\mathcal{L}\subset \R$ an open real interval and let
$F_j:\mathcal{L}\to\R,$ $j=0,1,\ldots,\ell,$ be $\ell+1$ linearly
independent analytic functions. Assume also that one of them, say
$F_k, 0\le k\le \ell,$  has constant sign on $\mathcal{L}.$ Then,
there exist real constants $d_j,$ $j=0,1,\ldots,\ell,$ such that the
linear combination $\sum_{j=0}^\ell d_j F_j$ has at least $\ell$
simple zeroes in ${\mathcal L}.$
\end{proposition}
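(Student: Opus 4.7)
My plan is to prove the proposition by induction on $\ell$, exploiting the constant-sign function $F_k$ to normalize the problem. First, since $F_k$ has constant sign, I may assume without loss of generality that $F_k>0$ on $\mathcal{L}$ and replace each $F_j$ by $G_j:=F_j/F_k$. The $G_j$ are still $\ell+1$ linearly independent analytic functions, one of them (namely $G_k$) is identically $1$, and because $F_k>0$, zeros of $\sum d_j F_j$ coincide in location and multiplicity with those of $\sum d_j G_j$. So it suffices to prove the claim under the assumption that one of the functions is the constant $1$.

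The base case $\ell=1$ is almost immediate: by linear independence the other function $G$ is non-constant, its derivative is then a nonzero analytic function, and any $x_0$ with $G'(x_0)\neq 0$ together with $c:=G(x_0)$ produces the required simple zero of $G-c$ at $x_0$.

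For the inductive step, after reindexing I may assume that $F_\ell$ is not the constant-sign function. Applying the hypothesis to $\{F_0,\ldots,F_{\ell-1}\}$, which still contains a constant-sign member and remains linearly independent, yields a combination $H=\sum_{j=0}^{\ell-1}e_j F_j$ with $\ell-1$ simple zeros $z_1<\cdots<z_{\ell-1}$ in $\mathcal{L}$. Now consider the one-parameter family $F_\lambda:=H+\lambda F_\ell$. By the implicit function theorem the $\ell-1$ simple zeros persist as simple zeros for $|\lambda|<\delta$ for some $\delta>0$. I would then select a point $z^*\in\mathcal{L}$ distinct from the $z_i$ at which $F_\ell(z^*)\neq 0$ and $(H/F_\ell)'(z^*)\neq 0$; this is possible because $F_\ell$ is not identically zero and $H/F_\ell$ is not constant (otherwise $F_\ell$ would lie in the span of $F_0,\ldots,F_{\ell-1}$, contradicting linear independence), so both bad sets are discrete by analyticity. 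Setting $\lambda^*:=-H(z^*)/F_\ell(z^*)$ makes $z^*$ a zero of $F_{\lambda^*}$, and the identity $F_{\lambda^*}'(z^*)=F_\ell(z^*)\cdot(H/F_\ell)'(z^*)\neq 0$ shows it is a \emph{simple} zero. By choosing $z^*$ sufficiently close to one of the $z_i$ at which $F_\ell$ does not vanish, $|\lambda^*|$ can be kept below $\delta$, so the $\ell-1$ original simple zeros are preserved while a new one appears at $z^*$.

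The main obstacle is arranging all of these conditions simultaneously: the smallness of $\lambda^*$, the genericity conditions on $z^*$, and the requirement that $F_\ell$ not vanish at the particular zero of $H$ near which we work. Analyticity reduces the exceptional set of $z^*$ to a discrete one, so a valid choice exists in any neighborhood of a $z_i$ with $F_\ell(z_i)\neq 0$. The one delicate case is when $F_\ell$ happens to vanish at every $z_i$; this can be handled by a slight perturbation of the coefficients $e_j$ within the open set of admissible combinations, which moves the $z_i$ continuously (preserving simplicity) until at least one of them lies off the discrete zero set of $F_\ell$.
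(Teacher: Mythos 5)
A preliminary remark: the paper gives no proof of Proposition \ref{pr:li}; it is quoted from \cite{ColGasPro}, so your argument has to stand on its own. Your normalization by $F_k$ and the base case $\ell=1$ are fine, but the inductive step contains a fatal circularity: the point $z^*$ you construct is not an \emph{additional} zero of $F_{\lambda^*}=H+\lambda^* F_\ell$, it is the displaced old one. By the uniqueness part of the implicit function theorem there are neighbourhoods $U_i\ni z_i$ and $(-\delta,\delta)\ni 0$ such that the only zero of $F_\lambda$ in $U_i$ is the continuation $x_i(\lambda)$ of $z_i$. You are forced to take $z^*$ close to $z_i$ precisely so that $\lambda^*=-H(z^*)/F_\ell(z^*)$ is small (this uses $H(z_i)=0$), and then $F_{\lambda^*}(z^*)=0$ with $|\lambda^*|<\delta$ and $z^*\in U_i$ gives $z^*=x_i(\lambda^*)$. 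To first order this is transparent: $x_i(\lambda)\approx z_i-\lambda F_\ell(z_i)/H'(z_i)$ while $\lambda^*\approx -(z^*-z_i)H'(z_i)/F_\ell(z_i)$, so $x_i(\lambda^*)\approx z^*$. Hence you can only certify $\ell-1$ simple zeros, namely $x_1(\lambda^*),\dots,x_{\ell-1}(\lambda^*)$, one of which is your $z^*$. A concrete failure: on $\mathcal L=(-1,1)$ take $F_0=1$, $F_1=x$, $F_2=1+x^2$ and $H=x$. Then $F_\lambda=\lambda x^2+x+\lambda$ has roots whose product equals $1$, so it never has two zeros in $(-1,1)$ for any $\lambda$ (and replacing $H$ by $x-c$ does not help), whereas the conclusion of the proposition is reached only by a non-perturbative combination such as $F_2-\tfrac54 F_0=x^2-\tfrac14$. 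The tension is intrinsic: keeping $\lambda^*$ small pushes $z^*$ onto a zero of $H$, exactly where the ``new'' zero merges with a persisting one, while taking $z^*$ far from the $z_i$ gives no control on $|\lambda^*|$.

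A workable proof does not perturb a previously obtained combination by the next function. Since $F_0,\dots,F_\ell$ are linearly independent, one can choose points $x_1<\dots<x_{\ell+1}$ in $\mathcal L$ at which the matrix $\big(F_j(x_i)\big)$ is invertible, and interpolate the values $(-1)^i$ to get a combination $F$ with $\ell$ sign changes, hence $\ell$ zeros of odd multiplicity. The constant-sign function is then used to make these zeros simple: writing $F-\ve F_k=F_k\big(F/F_k-\ve\big)$ and choosing $\ve$ small and a regular value of the non-constant analytic function $F/F_k$ on $[x_1,x_{\ell+1}]$ (there are only finitely many critical values there), the $\ell$ sign changes survive and every zero becomes simple. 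Note that in your argument the constant-sign hypothesis plays no role in the inductive step at all, which is itself a warning sign that the mechanism is not the right one.
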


Notice that in the next theorem, and due to Lemma \ref{le:lemma}, the
monomials of the Abelian integral that we consider are of the form
$x^{2q}y^{2p-1}.$

\begin{thmx}\label{th:general} Let $H(x,y)=A(x)y^2+B(x),$ with
$A$ and $B$ functions satisfying the hypotheses of Proposition
\ref{pr:zeroes}, and denote by $\gamma(h),$ $h\in(0,\overline h),$
the periodic orbits surrounding the origin of the corresponding Hamiltonian system. For $d_0,d_1,\ldots,d_n\in\R$ and
$q_j,p_j\in\N,j=0,1,\ldots,\ell$ consider the family of Abelian
integrals
\[
J(h)=\int_{\gamma(h)} \sum_{j=0}^\ell d_j x^{2q_j} y^{2p_j-1}\,dx.
\]
If all values $m_j=q_j+p_j, j=0,1,\ldots,\ell$ are different, there
exit values of $d_j,j=0,1,\ldots,\ell,$ such that the corresponding
function $J(h)$ has at least $\ell$ simple zeroes in $(0,\overline
h).$
\end{thmx}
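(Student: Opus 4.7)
The strategy is to decompose $J(h)$ as a linear combination of the $\ell+1$ elementary Abelian integrals
\[
J^{(j)}(h) := \int_{\gamma(h)} x^{2q_j}\, y^{2p_j-1}\, dx, \qquad j=0,1,\ldots,\ell,
\]
so that $J(h)=\sum_{j=0}^\ell d_j J^{(j)}(h)$, and then to apply Proposition \ref{pr:li} to this family. Each $J^{(j)}$ is an analytic function of $h$ on $(0,\overline h)$, since it is an Abelian integral of a polynomial $1$-form over the analytic family of ovals $\gamma(h)$. It therefore suffices to verify the two hypotheses of Proposition \ref{pr:li}: that at least one $J^{(j)}$ has constant sign on $(0,\overline h),$ and that the family $\{J^{(0)},\ldots,J^{(\ell)}\}$ is linearly independent.

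The constant-sign hypothesis is immediate from Lemma \ref{le:lemma}(a): for every $j$, the exponent $2q_j$ of $x$ is even and the exponent $2p_j-1$ of $y$ is odd, so $J^{(j)}(h)>0$ throughout $(0,\overline h)$. In fact \emph{every} element of the family turns out to have positive sign, which is stronger than required.

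The crux of the argument is linear independence, and this is precisely where the hypothesis that the integers $m_j := q_j+p_j$ are pairwise distinct becomes essential. The plan is to extract the leading order of each $J^{(j)}$ at $h=0^+$ by means of Proposition \ref{pr:zeroes}. Placing the center of $H$ at $x^{*}=0$, one identifies $D(x)=x^{2q_j}$ with the template $(x-x^{*})^{2n}D_0(x)$ by choosing $n=q_j$ and $D_0\equiv 1$, so that the constant $d$ in the statement of the proposition is $1$; applying the proposition with parameter $p=p_j$ then yields
\[
J^{(j)}(h)\sim C_j\, h^{m_j}\quad\text{as}\quad h\to 0^{+}, \qquad C_j=\frac{2\, b^{2q_j+1}}{a^{2p_j-1}}\cdot\frac{(2p_j-1)!!\,(2q_j-1)!!\,\pi}{2^{p_j+q_j}(p_j+q_j)!}\neq 0.
\]
Since the orders $m_j$ are pairwise distinct and the coefficients $C_j$ are all nonzero, any nontrivial relation $\sum c_j J^{(j)}\equiv 0$ forces, upon isolating the term with minimal exponent $m_{j_0}$ among those with $c_{j_0}\ne 0$, the identity $c_{j_0}C_{j_0}=0$, a contradiction. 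Hence the $J^{(j)}$ are linearly independent analytic functions on $(0,\overline h)$.

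With both hypotheses of Proposition \ref{pr:li} verified, that proposition produces constants $d_0,\ldots,d_\ell\in\R$ for which $J(h)=\sum d_j J^{(j)}(h)$ has at least $\ell$ simple zeroes in $(0,\overline h),$ which is the desired conclusion. The only delicate verification is the correct identification of $n$ and $p$ in Proposition \ref{pr:zeroes} that extracts the precise vanishing order $h^{q_j+p_j}$ for each basis integral; everything else is a direct combination of the three tools developed earlier in the section.
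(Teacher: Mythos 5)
Your proposal is correct and follows essentially the same route as the paper: decompose $J$ into the basis integrals $J_j$, obtain their leading orders $h^{m_j}$ from Proposition \ref{pr:zeroes} (with $n=q_j$, $p=p_j$, $D_0\equiv1$) to get linear independence from the distinct $m_j$, use Lemma \ref{le:lemma}(a) for the constant-sign hypothesis, and conclude via Proposition \ref{pr:li}. Your write-up is in fact slightly more explicit than the paper's (spelling out the nonvanishing constants $C_j$ and the minimal-exponent argument for independence), but there is no substantive difference.
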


\begin{proof} Notice that
\[
J(h)=\sum_{j=0}^\ell d_j J_j(h),\quad\mbox{where}\quad J_j(h)=
\int_{\gamma(h)}x^{2q_j} y^{2p_j-1}\,dx.
\]
By Proposition \ref{pr:zeroes}, for each $j=0,1,\ldots, \ell,$
$J_j(h)=k_j h^{m_j}+o\big(h^{m_j}\big)$ and, by hypothesis,  all
these $m_j$ are different. This clearly implies that all these
$\ell+1$ functions are linearly independent. Moreover, by item $(a)$
of Lemma \ref{le:lemma} we know that none of them vanish in
$(0,\overline h).$ Hence we can apply Proposition \ref{pr:li} to
this set of functions and $\mathcal{L}=(0,\overline h)$ and  the
result follows.
\end{proof}

\section{Applications}\label{se:appl}

In this section we consider perturbations of  several relevant PDE with continua of
periodic TWS and prove that the perturbations can be tailored such that a prescribed number of TWS persist in these perturbed PDE. In many examples, for simplicity, we perturb the PDE with an
additive term that only contains partial derivatives up to $m-1.$
For more general perturbations, even including terms of order $m,$
most of the results can be adapted.

\subsection{Second order PDE}\label{ss:m2}

We start with an illustrative toy example for which we give all the details on how a prescribed number of periodic TWS can be obtained.

\subsubsection{A toy example}
Consider the PDE
\begin{equation}\label{eq:toy}
u+au_{xx}+bu_{xt}+du_{tt}+\varepsilon g(u_x,u_t,\varepsilon)=0,
\end{equation}
with $g$ a $\mathcal {C}^1$ function and take $c$ such that
$a-bc+dc^2>0.$ Then equation \eqref{eq:pc} can be written as
\[
U+(a-bc+dc^2)U''+\varepsilon g(U',-cU',\varepsilon)=0.
\]
We define $C^2=a-bc+dc^2$ and
$g_c(U',\varepsilon)=-g(U',-cU',\varepsilon)/C^2.$ Then it is easy to
see that this PDE satisfies Property $\mathcal{A}$ with $H_c(x,y)=
x^2/(2C^2)+y^2/2,$ $s_c(x,y)\equiv 1$ and $(0,\overline
h)=(0,\infty).$ That is,
$$
 \begin{cases}
     \dot x=\phantom{-}\dfrac{\partial H_c(x,y)}{\partial y}=y,
     \\[10pt] \dot y=
     -\dfrac{\partial H_c(x,y)}{\partial x}
     +\varepsilon g_c(x,y,\varepsilon)=-\dfrac{x}{C^2} +\varepsilon g_c(y,\varepsilon).
 \end{cases}
$$
Moreover
\[
M_c(h)=\int_{\gamma(h)}  g_c(y,0)\, dx,
\]
where $\gamma(h)$ is the ellipse $\{x^2/(2C^2)+y^2/2=h\}.$ We
parameterize the closed curves $H_c(x,y)=h$ as $(x,y)= (C\sqrt{2h}
 \cos \theta, \sqrt{2h} \sin \theta)$  for $0\le \theta\le 2\pi.$
Then
\[
M_c(h)= -C\sqrt{2h} \int_0^{2\pi}
g_c(\sqrt{2h}\sin\theta,0)\sin\theta\,d\theta.
\]
Assume for instance that $g_c(y,0)=\sum_{j=0}^N g_j y^j$ is a
polynomial of degree $N$, and $g_j\in\mathbb{R}$. Then,
\[
M_c(h)=  -C\sqrt{2h} \sum_{j=0}^N g_j (\sqrt{2h})^{j}\Big(\int_0^{2\pi}
 \sin^{j+1} \theta \,d\theta\Big).
\]
When $j$ is even, by symmetry, the above integrals vanish. Hence
\[
M_c(h)=-2 C h\Big( \sum_{i=0}^{[(N-1)/2]} g_{2i+1} 2^i I_{2i+2}
h^{i} \Big),
\]
where $[\,\cdot\,]$ denotes the integer part and $I_{2n}
=\int_0^{2\pi} \sin^{2n}\theta\,d\theta>0.$ Removing the factor $h$,
and taking suitable
$g_{2i+1}$, the polynomial  $M_c(h)/h$ can be
any arbitrary polynomial of degree $[(N-1)/2]$ in $h$. Hence, by applying
Theorem \ref{th:general}, for any $\ell\le [(N-1)/2],$ there exist coefficients $g_j$ such that the function $M_c(h)$ has $\ell$ simple zeros and, therefore, by applying
Theorem \ref{th:main1}, the PDE \eqref{eq:toy} has at least~$\ell$
periodic TWS.  We remark that the above computations are essentially
the same as the ones of the celebrated paper \cite{LinMelPug1977}
where the authors present the first example of classical polynomial
Li\'{e}nard differential system of degree $N$ with $[(N-1)/2]$ limit
cycles.

By doing similar computations we can consider more general
perturbations in PDE \eqref{eq:toy}, like for instance
\[
u+au_{xx}+bu_{xt}+du_{tt}+\varepsilon ( u u_{xx}+ g(u,
u_x,u_t,\varepsilon))=0,
\]
and similar results hold.

\subsubsection{Reduced Ostrovsky equation.}

We consider perturbations of the reduced Ostrovsky equation, introduced by L.~Ostrovsky in 1978,  which is a modification of the Korteweg-de Vries equation that models gravity waves propagating in a rotating background under the influence of the Coriolis force when the high-frequency dispersion is neglected.  More
concretely, we take

\begin{equation}\label{eq:bur}
(u_t+uu_x)_x-u+\varepsilon g(u,u_x,u_t,\varepsilon)=0,
\end{equation}
which satisfies Property $\mathcal{A}$ with $c>0,$ because its associated
ODE is
\[
(U-c)U''+(U')^2-U+\varepsilon g(U,U',-cU',\varepsilon)=0.
\]
Then, taking $g_c(U,U',\varepsilon)=-g(U,U',-cU',\varepsilon)/(U-c);$ 
$\mathcal{V}_c=\{x<c\};$  $x_c=0;$  and $s_c(x,y)=(x-c)^{-2}$, 
the system that has to be studied to find TWS is
%$$
% \begin{cases}
%     \dot x=(x-c)^2\,y=\phantom{-}\dfrac{\partial H_c(x,y)}{\partial y},
%     \\[10pt] \dot y=-(x-c)\,y^2-cx+x^2 +(x-c)^2\varepsilon g_c(y,\varepsilon)=
%     -\dfrac{\partial H_c(x,y)}{\partial x}
%     +(x-c)^2\varepsilon g_c(x,y,\varepsilon).
% \end{cases}
%$$
$$
 \begin{cases}
     \dot x=\phantom{-}\dfrac{\partial H_c(x,y)}{\partial y},
     \\[10pt] \dot y=
     -\dfrac{\partial H_c(x,y)}{\partial x}
     +(x-c)^2\varepsilon g_c(x,y,\varepsilon).
 \end{cases}
$$
with
\[H_c(x,y)=\frac{(x-c)^2y^2}2+\frac {c x^2} 2 -\frac{x^3}3.
\]
Consider also the Melnikov-Poincar\'{e}-Pontryagin function
\[
M_c(h)=\int_{\gamma_c(h)}(x-c)^2 g_c(x,y,0) \, dx,\, h\in(0,c^3/3).
\]
As in the toy example, it is not difficult to find a 
perturbation term $g$ such that the 
the function $M_c(h)$ has
several simple zeroes in $(0,c^3/3)$ which, by Theorem~\ref{th:general},
give rise to periodic TWS of the PDE \eqref{eq:bur}.

\smallskip

\subsubsection{Perturbed non-linear Klein-Gordon equation.} The
 Klein-Gordon equation is a wave equation related to the Schr\"odinger equation, which is used to model spinless relativistic particles. It was introduced in 1926 in parallel by O. Klein,  W. Gordon and
V. Fock as a tentative to describe the relativistic electron dynamics. 
In the one-dimensional setting we look at a perturbation of this equation of the form
 \[
u_{tt}-u_{xx}+\lambda u^p+\varepsilon g(u,u_x,u_t,\varepsilon)=0,
\]
with $\lambda\in\R^+$ and $p$ an \emph{odd} integer. It can readily be seen
that it satisfies Property~$\mathcal{A}$ and  that the system that has to be
studied to find TWS is
\begin{equation*}
 \begin{cases}
     \dot x=\phantom{-}\dfrac{\partial H_c(x,y)}{\partial y},
     \\[10pt] \dot y=-\dfrac{\partial H_c(x,y)}{\partial x}
     +\varepsilon g_c(x,y,\varepsilon)  ,
 \end{cases}
\end{equation*}
where \[
H_c(x,y)=\frac{C x^{p+1}}{p+1}+\frac {y^2}2,
\]
with  $C=\lambda/(c^2-1),$
$g_c(x,y,\varepsilon)=-g(x,y,-cy,\varepsilon)/(c^2-1).$
The associated Melnikov-Poincar\'{e}-Pontryagin function is
\[
M_c(h)=\int_{\gamma_c(h)}g_c(x,y,0) \, dx,\quad h\in(0,\infty).
\]
The interested reader can take a look to the papers
\cite{CimGasMan1997, LiLiLliZha2001} where perturbations of this
Hamiltonian system and the zeros of its associated Melnikov-Poincar\'{e}-Pontryagin function 
are studied with two different approaches. 

In particular,
the zeroes of the above first integral can be studied in a similar way to the toy example
 considered at the beginning of this section. 
Notice, however, that when $p\geq 3$, instead of using trigonometric functions to parametrize the invariant closed curves, one can use the generalized polar coordinates introduced by Lyapunov in 1893 in his study of the stability of degenerate critical points,~\cite{Liapunov}.  All the details can be found in  \cite{CimGasMan1997}. Again, Theorem \ref{th:main1} guarantees that the zeros of the function $M_c(h)$ correspond with periodic TWS of the Klein-Gordon equation.

\smallskip

\subsubsection{Perturbed sine-Gordon equation.}

The sine-Gordon equation first appeared in 1862 in the context of differential geometry. Specifically in a study by E. Bour on surfaces of constant negative curvature. The equation was rediscovered later by J. Frenkel and
T. Kontorova in 1939, in their study of crystal dislocations. The equation is relevant to the community investigating integrable systems because it has soliton solutions.
Its perturbation
writes as
\[
u_{tt}-u_{xx}+\sin u+\varepsilon g(u,u_x,u_t,\varepsilon)=0.
\]
Again, it satisfies Property $\mathcal{A}$ for $c>1,$ and its associated
planar system is
\begin{equation*}
 \begin{cases}
     \dot x=\phantom{-}\dfrac{\partial H_c(x,y)}{\partial y},
     \\[10pt] \dot y=-\dfrac{\partial H_c(x,y)}{\partial x}
     +\varepsilon g_c(x,y,\varepsilon)  ,
 \end{cases}
\end{equation*}
where $$H_c(x,y)={C(1-\cos x)}+\frac{y^2}{2},$$
with  $C=1/(c^2-1)>0$ and
$g_c(x,y,\varepsilon)=g(x,y,-cy,\varepsilon)/(1-c^2).$
The Melnikov-Poincar\'{e}-Pontryagin function is
\[
M_c(h)=\int_{\gamma_c(h)}g_c(x,y,0) \, dx,\quad h\in(0,2C).
\]
The above type integrals are studied for instance in
\cite{GasGeyMan2016}. There, several condition on $g$ for obtaining
many simple zeroes of $M_c$, and therefore periodic TWS of the considered PDE, are obtained.

\subsection{PDE with order greater than 2}\label{ss:mb2}

In this section we study perturbations of several PDE with order
$m>2.$  We start with the following result that helps us to characterize the existence of centers for the unperturbed Hamiltonian systems that will appear.

\begin{lemma}\label{l:lemacentre}
Consider a Hamiltonian system of the form
$$
 \begin{cases}
     \dot x=\dfrac{\partial H(x,y)}{\partial y}=y\,m(x,y),
     \\[10pt] \dot y=-\dfrac{\partial H(x,y)}{\partial x}
=f(x,y)\,m(x,y),
 \end{cases}
$$   where $H\in \mathcal{C}^2$,  $m(x,y)>0$ and such that
$\frac{\partial}{\partial x}\left(y m(x,y)\right)+\frac{\partial}{\partial y}\left(f(x,y)m(x,y)\right)\equiv0.$ Then, a singular point of the form $(x_*,0)$ is a center if 
\begin{equation}\label{eq:condlema}
\frac{\partial}{\partial x}f(x,y)\Big|_{(x_*,0)}<0.
\end{equation} 

Furthermore, if $m(x,y)$ depends only on $x$, condition \eqref{eq:condlema} holds, and $H$ is analytic, then the Hamiltonian $H$ satisfies the hypotheses of Proposition \ref{pr:zeroes}.
\end{lemma}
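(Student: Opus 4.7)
\textbf{Proof plan for Lemma \ref{l:lemacentre}.} For the first assertion I would start by locating the singular point. Since $m>0$, the equation $\dot x = y\,m(x,y)=0$ forces $y=0$, and then $\dot y = f(x_*,0)\,m(x_*,0)=0$ forces $f(x_*,0)=0$. I would then compute the Jacobian of the vector field $X=(ym,\,fm)$ at $(x_*,0)$. Two of the four entries are easy: $\partial_x(ym)|_{(x_*,0)}=0$ and $\partial_y(ym)|_{(x_*,0)}=m(x_*,0)$. For the off-diagonal entry of the second row I use $f(x_*,0)=0$ to get $\partial_x(fm)|_{(x_*,0)}=f_x(x_*,0)\,m(x_*,0)$. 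For the remaining entry I invoke the compatibility hypothesis $\partial_x(ym)+\partial_y(fm)\equiv 0$ evaluated at $(x_*,0)$, which forces $f_y(x_*,0)\,m(x_*,0)=0$ and hence $f_y(x_*,0)=0$. Consequently the trace of the linearization vanishes and its determinant equals $-m(x_*,0)^2\,f_x(x_*,0)$, which is positive under the assumption \eqref{eq:condlema}. Thus the eigenvalues are purely imaginary, so $(x_*,0)$ is either a linear center or a weak focus. Exactly as in the proof of Proposition \ref{pr:zeroes}, the existence of the smooth first integral $H$ rules out both foci and nodes, so $(x_*,0)$ is a center.

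For the second assertion, assume $m=m(x)$. Integrating $H_y=y\,m(x)$ in $y$ yields $H(x,y)=\tfrac{m(x)}{2}y^2+B(x)$ for some function $B$; setting $A(x):=m(x)/2$ gives the form $H=A(x)y^2+B(x)$ required by Proposition \ref{pr:zeroes}. Since $m(x_*)>0$, one immediately has $A(x)=a^2+O(x-x_*)$ with $a^2=m(x_*)/2>0$. Normalising the Hamiltonian by an additive constant, I may assume $H(x_*,0)=0$, so $B(x_*)=0$. Differentiating the identity $H_x(x,0)=-f(x,0)\,m(x)$ and using $f(x_*,0)=0$ gives $B'(x_*)=0$ and $B''(x_*)=-f_x(x_*,0)\,m(x_*)>0$, where positivity comes from \eqref{eq:condlema}. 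Hence $B(x)=(x-x_*)^2/b^2+O((x-x_*)^3)$ with $b^2=2/B''(x_*)$, which is exactly the remaining hypothesis of Proposition \ref{pr:zeroes}. Analyticity of $H$ is inherited directly from the hypothesis.

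The only subtle step is the ``no focus, no node'' argument in part (a); everything else is a direct computation. This step is standard for Hamiltonian (more generally, integrable) planar systems and is already cited inside the proof of Proposition \ref{pr:zeroes}, so I would simply refer to it rather than redo it.
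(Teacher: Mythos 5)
Your proof is correct and follows essentially the same route as the paper: compute the linearization at $(x_*,0)$, observe that its determinant equals $-m^2(x_*,0)\,f_x(x_*,0)>0$, and invoke the fact that a nondegenerate singular point of a Hamiltonian system can be neither a focus nor a node; the second assertion is the same Taylor--expansion check of the hypotheses of Proposition~\ref{pr:zeroes}. You are in fact slightly more careful than the paper's own argument, which does not explicitly verify $B'(x_*)=0$ and contains a harmless factor-of-two slip in relating $b^2$ to $B''(x_*)$ (your $b^2=2/B''(x_*)$ is the correct normalization).
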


\begin{proof} Consider the vector field $X=(H_y,-H_x)$. Since
$$\det(\mathrm{D}X(x_*,0))=-m^2(x_*,0)\,\frac{\partial}{\partial x}\left(f(x,y)\right)\Big|_{(x_*,0)},
$$  then equation \eqref{eq:condlema} implies that 
$\det(\mathrm{D}X(x_*,0))>0$ and therefore  $(x_*,0)$ is a center (once more, remember that a singular point of a Hamiltonian system cannot be neither a focus nor a node).

If $m(x,y)=m(x)$, then $H(x,y)=y^2\,m(x)/2+B(x)$ for some analytic function~$B.$ Since $m(x)>0$ we can write $m(x)=2a^2+O(x-x_*)$ near $x=x_*.$ Suppose that condition \eqref{eq:condlema} holds, then $H_{xx}(x_*,0)=-f_x(x_*,0)m(x_*)>0$, and we can write
$1/b^2=B''(x_*)=H_{xx}(x_*,0)$, obtaining $B(x)= (x-x_*)^2/b^2+O\big((x-x_*)^3\big)$. So $H$ fulfills the hypotheses of Proposition \ref{pr:zeroes}.
\end{proof}

Observe that condition \eqref{eq:condlema} is equivalent to the fact that $H_{xx}(x_*,0)>0$ and $\det(\mathbf{H}_H(x_*,0))>0$ (where $\mathbf{H}$ is the hessian matrix), which implies that $H$ has a non-degenerate local minimum at $(x_*,0)$.

\subsubsection{Perturbed generalized Korteweg-de Vries equation}\label{sss:kdv} 
We consider a perturbation of a family of PDE which for certain values of
the parameters contains the celebrated Korteweg-de Vries and Benjamin-Bona-Mahony equations appearing in several domains of physics (non-linear mechanics, water
waves, etc.). More concretely, we consider the family of PDE
\begin{multline}\label{eq:kdv}
u_t+au_x+buu_x+duu_t+pu_{xxx}+qu_{xxt}+ru_{xtt}+su_{ttt}\\+\varepsilon
\nabla g(u,u_x,u_t,\varepsilon)\cdot (u_x,u_{xx},u_{xt},0)^t=0.
\end{multline}
Notice that the KdV equation corresponds to
$\varepsilon=0$ and $a=d=q=r=s=0,$ $b=-6$  and $p=1.$
The ODE  associated to \eqref{eq:kdv} is
\[
\Big((a-c) U+ \frac{b-dc}2{U^2}+C U''+\varepsilon
g(U,U',-cU',\varepsilon) \Big)'=0,
\]
where $C=p-qc+rc^2-sc^3.$  Notice that then,  for any function $U$
satisfying previous equation, it holds that  there exists  $k\in\R,$
such that
\begin{equation}\label{eq:rep}
(a-c) U+ \frac{b-dc}2{U^2}+C U''+\varepsilon
g(U,U',-cU',\varepsilon)=k
\end{equation}
Thus we have to study the equivalent planar system
\begin{equation*}
 \begin{cases}
     \dot x=y= \dfrac{\partial H_{c,k}(x,y)}{\partial y},
     \\[10pt] \dot y=\alpha_{c,k}+\beta_c x+ \gamma_c x^2  +\varepsilon g_c(x,y,\varepsilon)=  -\dfrac{\partial H_{c,k}(x,y)}{\partial x}
     +\varepsilon g_c(x,y,\varepsilon),
 \end{cases}
\end{equation*}
where
\[
H_{c,k}(x,y)=-\alpha_{c,k}x-\frac{\beta_c}2 x^2-\frac{\gamma_c}3 x^3
+\frac{1}2 y^2,\]
with
\[
\alpha_{c,k}=\frac{k}C,\quad \beta_c=\frac{c-a}C,\quad
\gamma_c=\frac{dc-b}C,
\]
and  $g_c(x,y,\varepsilon)=-g(x,y,-cy,\varepsilon)/C.$  Hence, using Lemma \ref{l:lemacentre}, it is not difficult to see that the PDE 
\eqref{eq:kdv} satisfies Property $\mathcal{B}$ when equation
$\alpha_{c,k}+\beta_c x+ \gamma_c x^2=0$ has two different real
solutions (that correspond to a center and a saddle of the planar
system). Then, by Theorem \ref{th:main2}, the periodic TWS that persist for
$\varepsilon$ small enough correspond to the simple zeroes of the
elliptic integral
\[
M_{c,k}(h)=\int_{\gamma_{c,k}(h)} g_c(x,y,0)\, dx
\]
in a suitable open interval of energies. 
This kind of Abelian integrals are
studied in detail in the classical paper of Petrov
(\cite{Petrov1988}) and more recently in the Chapter 3 of Part II of the book \cite{ChiLi2007}.
Again, it is not difficult to impose conditions on $g$ to get a prescribed number of 
TWS for \eqref{eq:kdv} for $\varepsilon$ small enough and different
values of $c$ and $k.$

%{\color{blue} Although when $\alpha_{c,k}\neq 0$  the Hamiltonian has not the form of the  ones studied in Proposition \ref{pr:zeroes} and therefore we cannot apply  Theorem \ref{th:general},}

\smallskip

\subsubsection{Perturbed Rosenau-Hyman equation} The Rosenau-Hyman equation is a generalization of the KdV equation. It was introduced in 1993 by P. 
Rosenau and J.M. Hyman to show the existence of solitary waves
with compact support (compactons) in the context of non-linear dispersive equations.
We consider the
perturbed equation
\[
u_t+a(u^n)_x+(u^n)_{xxx}+\varepsilon \nabla
g(u,u_x,u_t,\varepsilon)\cdot (u_x,u_{xx},u_{xt},0)^t=0,
\]
where $a\in\R$ and $n\in\N.$ To find TWS for it we have to study the
third order ODE
\begin{align*}
-&cU'+a(U^n)'+(U^n)'''+\varepsilon \nabla
g(U,U',-cU',\varepsilon)\cdot (U',U'',-cU'',0)^t\\&=
\big(-cU+aU^n+(U^n)''+\varepsilon g(U,U',-cU',\varepsilon)\big)'\\&=
(-cU+aU^n+n(n-1)U^{n-2}U'+nU^{n-1}U''+\varepsilon
g(U,U',-cU',\varepsilon)\big)'=0.
\end{align*}
Thus, we need to find solutions of the second order ODE
\[
-cU+aU^n+n(n-1)U^{n-2}U'+nU^{n-1}U''+\varepsilon
g(U,U',-cU',\varepsilon)=k
\]
with $k\in\R.$ It writes as the planar system
\[
  \left\{\!
   \begin{array}{l}
     x'=y, \\[10pt] y'=\dfrac{k+cx-ax^n-n(n-1)x^{n-2}y^2}{nx^{n-1}}+\varepsilon
     \dfrac{g_c(x,y,\varepsilon)}{nx^{n-1}},
   \end{array}
  \right.
\]
where $g_c(x,y,\varepsilon)=-g(x,y,-cy,\varepsilon).$  With the new
time $\tau,$  where $d \tau/d s= s_{c,k}(x,y)$ and
$s_{c,k}(x,y)=x^{2(1-n)}/n,$ we get $x=U(\tau)$ satisfies the
equivalent planar ODE
\begin{equation*}
 \begin{cases}
     \dot x=\phantom{-}\dfrac{\partial H_{c,k}(x,y)}{\partial y},
     \\[10pt] \dot y =-\dfrac{\partial H_{c,k}(x,y)}{\partial x}
     +\varepsilon x^{n-1} g_c(x,y,\varepsilon),
 \end{cases}
\end{equation*}
where
\[
H_{c,k}(x,y)= \frac{n}2 x^{2(n-1)}y^2-\frac k n
x^n-\frac{c}{n+1}x^{n+1}+\frac{a}{2n}x^{2n}.
\]
By Lemma \ref{l:lemacentre} (see the comment below its statement), if there exists a singular point $(x_*,0)$ such that
$$
\frac{\partial^2 H_{c,k}}{\partial x^2}(x_*,0)
=x_*^{n-2}\left(a\,(2n-1)\,x_*^n-c\,n\,x_*-k\,(n-1)\right)>0,
$$then it is a center. Furthermore, since the Hypothesis of Proposition \ref{pr:zeroes} are satisfied, we can apply Theorem \ref{th:main2} and the periodic TWS for the
perturbed PDE correspond to simple zeroes of
\[
M_{c,k}(h)=\int_{\gamma_{c,k}(h)} x^{n-1} g_c(x,y,0)\, dx\]
in a suitable interval of the energy. 
To get examples of perturbations
with several simple zeroes we can apply 
Theorem~\ref{th:general}.

%To get examples of perturbations
%with several simple zeroes we can apply again the results of Theorem
%\ref{th:general}.

\smallskip

\subsubsection{ Camassa-Holm equation and related PDE} The
Camassa-Holm equation is a model for the propagation of shallow
water waves of moderate amplitude. The horizontal component of the
fluid velocity field at a certain depth within the fluid is 
described by the PDE
\[
u_t+(2\kappa+3u)u_x-2u_xu_{xx}+uu_{xxx}-u_{xxt}=0,
\]
and the parameter $\kappa$ is positive. Constantin and Lannes
derived in \cite{ConLan2009}  a similar PDE for surface waves also with 
moderate amplitude in the shallow water regime,
\[u_t+\big(1+6u-6u^2+12u^3)u_x+28u_xu_{xx}+14uu_{xxx}+u_{xxx}-u_{xxt}=0,\]
see also \cite{GasGey2014}. Similarly, the Degasperis-Procesi equation 
\[
u_t+4uu_x-3u_xu_{xx}-uu_{xxx}-u_{xxt}=0,
\]
which was derived initially only for its integrability properties, has a similar role in hydrodynamics.

In fact, perturbations of the above equations can be written under
the common expression
\begin{multline*}
u_t+A'(u)u_x+bu_x u_{xx}+d u
u_{xxx}\\+pu_{xxx}+qu_{xxt}+ru_{xtt}+su_{ttt}+\varepsilon \nabla
g(u,u_x,u_t,\varepsilon)\cdot (u_x,u_{xx},u_{xt},0)^t=0,
\end{multline*}
where $A$ is sufficiently smooth and $b,d,p,q,r$ and $s$ are real
parameters. Its associated  third order ODE is
\begin{align*}
&-cU'+A'(U)U'+bU'U''\\&\hspace{4cm}+ dUU'''+CU'''+\varepsilon \nabla
g(U,U',-cU',\varepsilon)\cdot (U',U'',-cU'',0)^t\\
&=\Big(A_c(U)+b(U')^2/2+ d\big( UU''-(U')^2/2\big) +C
U''+\varepsilon g(U,U',-cU',\varepsilon) \Big)'=0
\end{align*}
where $A_c(U)=A(U)-cU,$ with $A_c(0)=0,$ and  $C=p-qc+rc^2-sc^3.$
Hence, for any function $U$ satisfying the previous equation, there
exists $k\in\R,$ such that
\[
A_c(U)+\beta (U')^2+(C+ d U)U''  +\varepsilon
g(U,U',-cU',\varepsilon)=k,
\]
where $\beta=(b-d)/2.$ The above equation can be written as the planar system

\begin{equation*}
 \begin{cases}
     x'=y,
     \\[10pt] y'=\dfrac{k-A_c(x)-\beta y^2+\varepsilon g_c(x,y,\varepsilon)}{C+dx},
 \end{cases}
\end{equation*}
where $g_c(x,y,\varepsilon)= -g(U,U',-cU',\varepsilon).$ Then,
taking $d\tau/d s= s_{c}(x),$
\[
s_{c}(x)=\begin{cases} (C+dx)^{-2\beta/d} \quad & \mbox{when}\quad
d\ne0,\\
e^{-2\beta x/C} \quad & \mbox{when}\quad d=0,
\end{cases}
\]
 we get
\begin{equation*}
 \begin{cases}
     \dot x=\phantom{-}\dfrac{\partial H_{c,k}(x,y)}{\partial y},
     \\[10pt] \dot y=-\dfrac{\partial H_{c,k}(x,y)}{\partial x}
     +\varepsilon \dfrac{g_c(x,y,\varepsilon)}{(C+dx) s_{c}(x)},
 \end{cases}
\end{equation*}
with
\[
H_{c,k}(x,y)= \frac{y^2}{2s_c(x)}+\int_0^x  \dfrac{A_c(w)-k}{(C+dw) s_{c}(w)} \, dw.
\]
By Lemma \ref{l:lemacentre},  any singular point $(x_*,0)$ such that
$H_{xx}(x_*,0)>0,$ is a center. So by Theorem \ref{th:main2}, the periodic TWS of the perturbed equations correspond with the 
simple zeros of
\[
M_{c.k}(h)=\int_{\gamma_{c,k}(h)}  \dfrac{g_c(x,y,0)}{(C+dx)
s_{c}(x)} \, dx.
\] Again, for some particular examples, the zeroes of the above type of Abelian integrals can be obtained
by using Theorem \ref{th:general}. For instance, we observe that this is trivially the case if $g_c(x,y,0)=(C+dx) s_{c}(x)\left(\sum_{i=0}^\ell d_{2i+1} y^{2i+1}\right)$.

\subsubsection{Boussinesq-type equations} The Boussinesq
equation  describes bi-directional surface water waves and reads 
\[
u_{tt}+uu_{xx}-u_{xx}+(u_x)^2-u_{xxxx}=0.
\]
Similarly, the modified Boussinesq equation is
\[
u_{tt}+uu_{xx}-u_{xx}+(u_x)^2-u_{xxtt}=0,
\]
and appears in the modelling of non-linear waves in a weakly
dispersive medium. We consider the following perturbation of the
family of PDE
\begin{multline}\label{eq:bu}
au_{xx}+bu_{xt}+du_{tt}+2e(uu_{xx}+(u_x)^2)\\+pu_{xxxx}+qu_{xxxt}+ru_{xxtt}+su_{xttt}+fu_{tttt}+\varepsilon
G=0,
\end{multline},
where  $a, b,d,e, p,q,r, s,f$ are suitable real
parameters. We  do not detail here the perturbation $G,$
but it is a function of all the partial derivatives of $u$ up to order four, and such that
after replacing $u$ by $U(x-ct)$ it holds that there exists a
function $g_c$ such that $G=\big(g_c(U,U',\varepsilon)\big)''.$
Hence the ODE associated to \eqref{eq:bu} is
\[
C U''+ e (U^2)''+D U''''+\varepsilon
\big(g_c(U,U',\varepsilon)\big)''= \big(  C U+ e U^2+D
U''+\varepsilon g_c(U,U',\varepsilon)  \big)''=0,
\]
where $C=a-bc+d^2c,$ $D=p-qc+rc^2-sc^3+fc^4,$  and we have used that
$(u^2)_{xx}=2uu_x+2(u_x)^2.$ We are interested in solutions of the
above fourth order ODE
\begin{equation}\label{eq:fin}
 C U+ e U^2+D
U''+\varepsilon g_c(U,U',\varepsilon)=k,
\end{equation}
for some $k\in\R.$ When $D\ne0$ we are again under the situation
covered by Theorem \ref{th:main2}. Notice that other solutions would
satisfy $ C U+ e U^2+D U''+\varepsilon
g_c(U,U',\varepsilon)=k_1s+k_2,$ for some $k_1\ne0,k_2\in\R,$ but we do
not consider them.
 In fact, from \eqref{eq:fin}  we arrive at the same ODE that appears in
the study done in Section \ref{sss:kdv} about the perturbed generalized
Korteweg-de Vries equation, but with a different notation. Indeed,
the above ODE is the same as \eqref{eq:rep} and it can be studied
to get TWS for \eqref{eq:bu} exactly like in that case.

\bigskip

\subsection*{Acknowledgements} The first and third authors are supported by Ministry of Science and Innovation--Research Agency of the Spanish
Government by grants PID2019-104658GB-I00 (first author) and DPI2016-77407-P
 (AEI/FEDER, UE, third author) and
 by the grants AGAUR,  Generalitat de Catalunya
(2017-SGR-1617,  first author) and  (2017-SGR-388, third author).

Part of this work was carried out at the Erwin Schr\"odinger
International Institute for  Mathematics and Physics where  authors
participated in a Research in Teams project in 2018.

\bibliographystyle{acm}
%\bibliography{bibfile}

\end{document}